\newtheorem{theorem}{Theorem}[section]
\newtheorem{remark}{Remark}[section]
\newtheorem{lemma}{Lemma}[section]
\newtheorem{example}{Example}[section]
\title{Decay estimate in a viscoelastic plate equation with past history, nonlinear damping, and logarithmic nonlinearity}
\author{Bhargav Kumar Kakumani\thanks{bhargav@hyderabad.bits-pilani.ac.in}}
\author{Suman Prabha Yadav}
\affil{Department of Mathematics, BITS-Pilani, Hyderabad Campus, Hyderabad, India.}
\date{\today}
\begin{document}
\maketitle

\begin{abstract}
	In this article, we consider a viscoelastic plate equation with past history, nonlinear damping, and logarithmic nonlinearity. We prove explicit and general decay rate results of the solution to the viscoelastic plate equation with past history. Convex properties, logarithmic inequalities, and generalised Young's inequality are mainly used to prove the decay estimate.
\end{abstract}
\textbf{Keywords:}Viscoelasticity, Local existence, Convexity, Decay estimate, Logarithmic nonlinearity.\medskip \\
\textbf{2020 MSC:} 35A01, 35B35, 35L55, 74D10, 93D20.
\section{Introduction}
In this article, we consider the decay rate results of the solution to a viscoelastic plate equation with past history. Let $\Omega$ be a smooth bounded domain of $\mathbb{R}^n$ and $u$ denotes the transverse displacement of waves. Assume $u_0,u_1$ are given initial data, then the partial differential equation is governed by the plate equation is given by:
\begin{equation}
\label{eqn_s1}
\left\{ \begin{array}{ll}
\left |u_{t} \right |^{\rho }u_{tt}+\Delta ^{2}u+\Delta ^{2}u_{tt}+u-\displaystyle\int_{0}^{\infty}b(s)\Delta ^{2}u(t-s)ds +h(u_{t})=ku \ln \left | u \right |,\ \text{in} \  \Omega\times(0,\infty), \medskip \\
u\left ( x,t \right )=\frac{\partial u}{\partial \nu}\left ( x,t \right )=0, \hspace{0.2cm}  \text{in} \hspace{0.2cm}  \, \partial \Omega \times(0,\infty),  \medskip\\
u(x,-t)=u_{0}(x,t), \hspace{0.5cm}  u_{t}(x,0)=u_{1}(x), \hspace{0.2cm} \textrm{in} \hspace{0.2cm}\, \Omega, t\geq 0,
\end {array} \right.
\end{equation}
where $\nu$ is the outer unit normal to $\partial\Omega$, $b$, $h$ are functions (defined later) and $\rho $ is a positive constant ($\rho>0$ if $n\geq 2$ and $0 < \rho \leq \frac{2}{n-2}$ if $n \geq 3 $).\\
\noindent Viscoelasticity takes into account of time independent solid behaviour (namely, elastic) and time dependent fluid behaviour (namely, viscosity). Some properties of viscoelastic materials are similar to those of elastic solids and some with Newtonian viscous fluids. Because of the significant advancements in the rubber and plastics industries, the importance of material viscoelastic characteristics has been recognised.\\
We begin our review with Dafermos' pioneer paper \cite{C.M._1970}, in which the author presented the following one-dimensional viscoelastic problem:
\begin{equation*}
\left\{ \begin{array}{ll}
\rho u_{tt}=cu_{xx}-\int_{-\infty }^{t}g(t-\tau )u_{xx}\mathrm{d} \tau, \hspace{.2cm} x \in [0,1],\\
u(0,t)=u(1,t)=0, \hspace{.2cm} t \in (-\infty, \infty).
\end {array} \right.
\end{equation*}
Dafermos' proved that for smooth monotonic decreasing relaxation functions, the solutions go to zero as $t$ tends to infinity, he also established an existence result. However, no rate of decay has been specified. After that many author tries to prove the local/global existence results and also tries to calculate the explicit decay result. 
%For past history problems, all the relaxation functions are either polynomial or exponential decay.\\
In \cite{Guesmia2020} the authors considered viscoelastic wave equation with infinte memory given by the equation:
\begin{equation*}
u_{tt}- \Delta u+\int_{0}^{\infty}g(s)\Delta u(t-s)\mathrm{d}s=0, \hspace{0.2cm} \mathrm{in} \hspace{0.2cm} \Omega \times \mathbb{R}^+,
\end{equation*} 
and established energy decay result without taking any assumption on the boundedness of initial data and any growth constraint on the damping term. Aissa Guesmia and  S.A.Messaoudi in \cite{Aissa_salim} considered:
\begin{equation*}
%\left\{ \begin{array}{ll}
u_{tt}-\Delta u +\int_{o}^{t}g_{1}(t-s) \mathrm{div}(a_{1}(x) \nabla u(s)) \mathrm{d}s
+\int_{0}^{\infty}g_{2}(s)\mathrm{div}(a_{2}(x)\nabla u (t-s)) \mathrm{d}s=0,\ \mathrm{in}\ \Omega \times \mathbb{R}^+,\\
%u(x,t)=0, \hspace{0.2cm} \forall x \in \partial \Omega, \hspace{0.2cm} \forall t\geq 0\\
%u(x,-t)=u_{0}(x,t), \hspace{0.4cm} u_{t}(x,0)=u_{1}(x), \hspace{0.2cm} \forall x \in \Omega, \hspace{0.2cm} \forall t\geq 0,
%\end {array} \right.
\end{equation*}
where $g_{1}$ and $g_{2}$ are two positive nonincreasing functions defined on $\mathbb{R}^+$, and $a_{1},a_{2}$ are nonnegative bounded function defined on $\Omega$. The authors in \cite{Aissa_salim} proved the general decay result. Later, A.M. Mahdi in \cite{adel_M} considered the problem (\ref{eqn_s1}) with $h=0$ and established an explicit and general decay rate results. For more work related to the past history problems, refer to \cite{GUESMIA2011748, MM_2020, MA_TF, Pata_VCG, KANG2015509, AL_MM} (and the references there in). Throughout this paper, we consider the following hypothesis:\\
\begin{enumerate}
	\label{eq_s2}
	\item[(H1)] Let $b:\mathbb{R}^{+}\rightarrow \mathbb{R}^{+}$
	be a nonincreasing $C^1$- function which satisfies:
	\begin{equation*}
	0<b(0),\, 1-\int_{0}^{\infty}b(\tau)d\tau=l>0.
	\end{equation*}
	\label{eqn_s3}
	\item[(H2)] Assume that $B:(0,\infty )\rightarrow (0,\infty )$ be a $C^1$ function which is linear or strictly convex $C^2$ function and strictly increasing on $(0,r_1],\ {\rm where}\ r_1\leq b(0)$, $B(0)=B'(0)=0 $ and $B$ satisfies 
	\begin{equation*}
	b'(t)\leq -\xi(t)B(b(t)), \quad \forall t\geq 0,
	\end{equation*}
	where $\xi:\mathbb{R}^{+}\rightarrow \mathbb{R}^{+}$ is a $C^1$ nonincreasing positive function with $\xi(0)>0$.
	\item[(H3)] Let $h :\mathbb{R} \rightarrow \mathbb{R}$ is a nondecreasing continuous function which satisfies (for some $c_1,c_2,\epsilon$ are positive constants):
	\begin{center}
		$\tilde{h}(\left | t \right |) \leq \left | h(t) \right | \leq \tilde{h}^{-1}(\left |t \right | )$,\ $\forall$  $\left | t \right | \leq  \epsilon,$ \\
		$c_{1}\left | t \right |\leq \left | h(t) \right |  \leq c_{2} \left | t \right |$,\ $\forall$ $\left | t \right | \geq  \epsilon$,
	\end{center}
	where $\tilde{h}\in C^{1}(\mathbb{R}^{+})$ with $ \tilde{h}(0)=0 $ which is strictly increasing function. When $ \tilde{h} $ is nonlinear, define $H$ to be a strictly convex $ C^{2} $ function in $ (0,r_2],$ where $ r_2 > 0 $ such that $ H(t) = \sqrt{t}\tilde{h}(\sqrt{t}) $.
\end{enumerate}
\begin{remark}
	\label{Remark_1}
	Since $B$ is strictly convex on $(0,r_{1}]$ and $B(0)=0$, then
	\label{con_B}
	\begin{equation*}
	B(\theta t)\leq \theta B(t), \hspace{.2cm} 0\leq \theta \leq 1 \hspace{.2cm} \text{and} \hspace{.2cm}t \in (0,r_{1}].
	\end{equation*}
\end{remark}
In this article, we give certain notations and declare existence results in Section 2. In addition, we express a couple of Lemmas in Section 2 that will be useful later. We state and establish the decay rate estimate in Section 3, as well as present an example to demonstrate the decay rate.

%%%%%%%%%%%%%%%%%%%%%%%%%
\section{Preliminaries and Existence results}
In this section, we review Dafermos' theory (see \cite{C.M._1970}) and define the energy functional that is relevant to our problem. We also state a local existence result and a couple of additional Lemma's that will be useful later on. We introduce the function $\eta$ as follows:
\begin{equation}
\label{R_1}	
\eta ^{t}(x,s)=u(x,t)-u(x,t-s), \hspace{.2cm}  \forall s, t \geq 0,\ x\in \Omega, 
\end{equation}
then  the initial and boundary conditions are obtained as follows:
\begin{equation*}
\left\{ \begin{array}{ll}
\eta ^{t}(x,0)=0, \hspace{.2cm}\forall t\geq 0,\ x \in \Omega, \\
\eta ^{t}(x,s)=0, \hspace{.2cm} \forall s,t \geq 0,\ x \in \partial\Omega, \\
\eta ^{0}(x,s)= \eta_{0}(x,s)=u_{0}(x,0)-u_{0}(x,s), \hspace{.2cm}\forall  s\geq 0,\ x \in \Omega.
\end{array} \right.
\end{equation*}
Observe that, (\ref{R_1}) implies
\begin{equation}
\eta_{s}^{t}(x,s)+\eta_{t}^{t}(x,s)=u_{t}(x,t).
\end{equation}
After combining (\ref{eqn_s1}) and (\ref{R_1}), we obtain the following system
\begin{equation}
\label{sys_1}
\left\{ \begin{array}{ll}
\left |u_{t} \right |^{\rho }u_{tt}+l\Delta ^{2}u+\Delta ^{2}u_{tt}+u-\displaystyle\int_{0}^{\infty}b(s)\Delta ^{2}\eta^{t}ds +h(u_{t})=ku \ln \left | u \right |, \quad  t\geq 0,\  x \in \Omega,\medskip \\	
\eta _{s}^{t}(x,s)+\eta_{t}^{t}(x,s)-u_{t}(x,t)=0, \hspace{.2cm} \forall s,t\geq 0, \hspace{.2cm} x\in \Omega, 
\end {array} \right.
\end{equation}
with the following initial and boundary data
\begin{equation}
\left\{ \begin{array}{ll}
u(x,-t)=u_{0}(x,t), \hspace{.2cm} u_{t}(x,0)=u_{1}(x), \hspace{.2cm} \forall t\geq 0, \forall x\in \Omega, \\
\eta^{0}(x,s)=\eta_{0}(x,s)=u_{0}(x,0)-u_{0}(x,s), \hspace{.2cm} \eta^{t}(x,0)=0,\hspace{.2cm} \forall s,t\geq 0, \hspace{.2cm} \forall x \in \Omega,\\
u(x,t)=0 \hspace{.2cm} \eta^{t}(x,s)=0 \hspace{.2cm} {\rm in}\ \hspace{.2cm} \partial  \Omega, \hspace{.2cm} \forall s,t\geq 0.
\end {array} \right.
\end{equation}
The energy functional associated with system (\ref{sys_1}) is given by
\begin{equation}
\begin{array}{ll}
\label{E_fn}
E(t) =\frac{1}{\rho +2}\left \| u_{t} \right \|_{\rho +2}^{\rho +2}+\frac{l}{2}\left \| \Delta u \right \|_{2}^{2}+\frac{1}{2}\left \| \Delta u_{t} \right \|_{2}^{2} -\frac{k}{2}\int_{\Omega }u^{2} \ln \left | u \right |\mathrm{d}x+ \frac{k+2}{4}\left \| u \right \|_{2}^{2}+\frac{1}{2}(b\circ\Delta \eta^{t}),
\end{array}
\end{equation}
where $(b\circ \Delta n^{t})(t)=\int_{0}^{\infty}b(s)\left | \Delta \eta^{t} \right |^2 \mathit{d} s$ and $\|..\|_2=\|..\|_{L^2(\Omega)}$.  Differentiating  $E(t)$ with $t$ and making use of system (\ref{sys_1}), we obtain
\begin{equation}
\label{E'_t}
E'(t) \leq \frac{1}{2}(b\circ\Delta \eta^{t})-\int_{\Omega} u_{t}h(u_{t}) \leq 0.
\end{equation}
\begin{remark}
	\label{b_1}
	For any $\epsilon_{0} \in (0,1), $ we obtain that
	\begin{center}
		$(b \circ \Delta \eta^{t})= (b \circ \Delta \eta^{t})^{\frac{\epsilon_{0}}{1+\epsilon_{0}}} (b \circ \Delta \eta^{t})^{\frac{1}{1+\epsilon_{0}}} \leq c(b \circ \Delta \eta^{t})^{\frac{1}{1+\epsilon_{0}}}. $
	\end{center}
\end{remark}
\begin{theorem}
	\label{local_existence}
	Let $(u_{0}(.,0),u_{1}) \in H_{0}^{2}(\Omega) \times H_{0}^{2}(\Omega),$ and assume that the hypothesis (H1)-(H3) hold.  Then the problem \eqref{eqn_s1} has weak solution on $[0,T].$
\end{theorem}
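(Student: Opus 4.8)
\section*{Proof proposal for Theorem~\ref{local_existence}}

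The plan is to prove existence of a weak solution to the equivalent coupled system \eqref{sys_1} by the Faedo--Galerkin method. Let $\{w_j\}_{j\ge1}$ be the $L^2$-orthonormal basis of eigenfunctions of $\Delta^2$ with clamped boundary conditions, and let $\{\phi_j\}_{j\ge1}$ be a basis of the memory space $\mathcal M_b:=L^2_b(\mathbb R^+;H^2_0(\Omega))$, with inner product $\langle\eta,\zeta\rangle=\int_0^\infty b(s)(\Delta\eta(s),\Delta\zeta(s))\,ds$, chosen inside the domain of the translation generator $-\partial_s$; here one also assumes, as is standard, the compatibility condition $\eta_0\in\mathcal M_b$ on the past history $u_0$. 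I would look for $u^m(t)=\sum_{j=1}^m a_{jm}(t)w_j$ and $\eta^m(t)=\sum_{j=1}^m c_{jm}(t)\phi_j$ solving the projection of \eqref{sys_1} onto these subspaces, with $u^m(0),u^m_t(0),\eta^m(0)$ the projections of $u_0(\cdot,0),u_1,\eta_0$. Writing $u^m_t=\sum a_{jm}'w_j$, the first projected equation takes the form $M(a')a''=F(a,a',c)$ with $M_{jk}(a')=(|u^m_t|^\rho w_k,w_j)+(\Delta w_k,\Delta w_j)$, a symmetric matrix that is positive definite for every value of $a'$ (the first summand is positive semidefinite, the second positive definite) and depends continuously on $a'$; hence $M(a')^{-1}F$ is continuous and Peano's theorem gives a local solution on a maximal interval $[0,t_m)$.

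The next step is $m$-independent a priori bounds on a common interval $[0,T]$, $T>0$ small. Testing the first equation with $u^m_t$ and the second with $b(s)\eta^m$ in $\mathcal M_b$ and adding, the memory cross-terms combine; integrating by parts in $s$ (using $\eta^m(\cdot,0)=0$), invoking $b'\le0$ from (H1)--(H2), and using that $h$ is nondecreasing with $h(0)=0$ so that $u^m_t h(u^m_t)\ge0$, yields the approximate analogue of \eqref{E'_t},
\begin{equation*}
\frac{d}{dt}E^m(t)\le \tfrac12\bigl(b'\!\circ\!\Delta\eta^m\bigr)-\int_\Omega u^m_t\,h(u^m_t)\,dx\le 0,
\end{equation*}
with $E^m$ the Galerkin version of \eqref{E_fn}. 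The delicate point is that $E^m$ contains the non sign-definite term $-\tfrac{k}{2}\int_\Omega (u^m)^2\ln|u^m|\,dx$; here I would use the logarithmic Sobolev inequality together with the elementary bounds $s^2\ln|s|\le C_\delta\,s^{2+\delta}$ for $|s|\ge1$ and $s^2\ln|s|\ge-\tfrac1{2e}$ for $|s|\le1$, and the embedding $H^2_0(\Omega)\hookrightarrow L^{2+\delta}(\Omega)$, to show that, after possibly decreasing $T$, $E^m(t)$ stays bounded and controls $\|u^m_t\|_{\rho+2}^{\rho+2}+\|\Delta u^m\|_2^2+\|\Delta u^m_t\|_2^2+\|u^m\|_2^2+(b\circ\Delta\eta^m)$; a Gronwall argument with right-hand side of the form $C(1+Y+Y^{1+\delta'})$ then closes the estimate uniformly in $m$ (and shows $t_m>T$). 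Testing the first equation with $u^m_{tt}$ and integrating the $l\Delta^2u^m$, $h(u^m_t)$ and logarithmic terms by parts in $t$ (using $(|u^m_t|^\rho u^m_{tt},u^m_{tt})\ge0$ and the regularizing term $\Delta^2u^m_{tt}$) gives in addition a uniform bound for $u^m_{tt}$ in $L^2(0,T;H^2_0(\Omega))$.

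Then I would pass to the limit. Along a subsequence, $u^m\overset{*}{\rightharpoonup}u$ in $L^\infty(0,T;H^2_0)$, $u^m_t\overset{*}{\rightharpoonup}u_t$ in $L^\infty(0,T;H^2_0)$, $u^m_{tt}\rightharpoonup u_{tt}$ in $L^2(0,T;H^2_0)$, and $\eta^m\overset{*}{\rightharpoonup}\eta$ in $L^\infty(0,T;\mathcal M_b)$; the Aubin--Lions--Simon lemma gives $u^m\to u$ and $u^m_t\to u_t$ strongly in $C([0,T];H^1_0(\Omega))$, hence a.e.\ in $Q_T:=\Omega\times(0,T)$. Using this a.e.\ convergence I would identify the nonlinearities: $h(u^m_t)\to h(u_t)$ by continuity of $h$, the growth bound in (H3) and Vitali's theorem; $u^m\ln|u^m|\to u\ln|u|$ via $|s\ln|s||\le C(1+|s|^{1+\delta})$ and the uniform $L^\infty(0,T;L^q)$ bound on $u^m$; and $|u^m_t|^\rho u^m_{tt}\rightharpoonup|u_t|^\rho u_{tt}$ by writing $|u^m_t|^\rho u^m_{tt}=\tfrac1{\rho+1}\partial_t\bigl(|u^m_t|^\rho u^m_t\bigr)$, noting $|u^m_t|^\rho u^m_t\to|u_t|^\rho u_t$ a.e.\ and boundedly in $L^\infty(0,T;L^{(\rho+2)/(\rho+1)})$, and passing to the limit in the distributional $t$-derivative. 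Passing to the limit in the projected system, and recovering the initial data from the $C([0,T];\cdot)$ convergence, produces a weak solution to \eqref{sys_1}, equivalently to \eqref{eqn_s1}, on $[0,T]$.

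The two genuine obstacles I anticipate are: (i) closing the a priori estimate in the presence of the non-coercive logarithmic source, which is what forces the local (short-time) character of the statement and requires the logarithmic-Sobolev and generalized Young inequalities announced in the abstract; and (ii) the degenerate inertial term $|u_t|^\rho u_{tt}$, both for the Galerkin ODE system — the mass matrix $M(a')$ is state-dependent and, for $0<\rho<1$, only continuous (not locally Lipschitz) in $a'$, so one invokes Peano's rather than Picard's theorem — and for the weak identification of $|u^m_t|^\rho u^m_{tt}$ in the limit, which is precisely why the strong convergence of $u^m_t$, obtained from the $u^m_{tt}$ bound that the strong damping $\Delta^2u_{tt}$ makes available, is indispensable. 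The history term, by contrast, is handled routinely within the Dafermos translation framework once one works in $\mathcal M_b$.
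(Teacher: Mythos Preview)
Your Faedo--Galerkin scheme is the standard route and is almost certainly what the cited reference carries out; the paper itself does not prove Theorem~\ref{local_existence} but simply writes ``The proof of the above theorem can be obtained by following similar lines given in \cite{bharu_suman}.'' So there is no in-paper argument to compare against, and your outline --- Galerkin approximation in $H^2_0(\Omega)\times\mathcal M_b$, Peano for the finite-dimensional system because the mass matrix is only continuous when $0<\rho<1$, the approximate energy identity giving \eqref{E'_t}, a second estimate to gain compactness on $u^m_t$, and Aubin--Lions plus Vitali to identify the nonlinearities --- is exactly the expected architecture.

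Two places deserve a sharper statement than you give. First, the control of the logarithmic source: since $E^m$ is nonincreasing, the issue is purely one of \emph{coercivity}, not of a Gronwall inequality in $t$. The usual device is the logarithmic Sobolev inequality, which yields $\int u^2\ln|u|\,dx\le \frac{a^2}{2\pi}\|\nabla u\|_2^2+\big(\ln\|u\|_2-n(1+\ln a)\big)\|u\|_2^2$ and, after choosing $a$ so that $\frac{ka^2}{2\pi}<l$, shows that $E^m$ dominates $\|\Delta u^m\|_2^2$ up to a lower-order $\|u^m\|_2^2\ln\|u^m\|_2$ term; a continuation/bootstrap argument on $\|u^m\|_2$ then closes the bound on a short interval. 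Your $s^2\ln s\le C_\delta s^{2+\delta}$ route produces a superlinear algebraic inequality $\tfrac{l}{2}Y\le E^m(0)+CY^{1+\delta/2}$, which by itself does not give a bound; you would need to combine it with a continuity argument starting from $Y(0)$, not a Gronwall inequality. Second, when you test with $u^m_{tt}$, the term $k(u^m\ln|u^m|,u^m_{tt})$ is most cleanly handled by absorbing it directly via Cauchy--Schwarz and the coercive $\|\Delta u^m_{tt}\|_2^2$ on the left, rather than integrating by parts in $t$ (which produces $(1+\ln|u^m|)|u^m_t|^2$ and an awkward singularity at $u^m=0$); the strong damping $\Delta^2 u_{tt}$ is precisely what makes this direct absorption work and gives $u^m_{tt}\in L^2(0,T;H^2_0)$ straightaway.

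With those two adjustments your plan is sound; you have correctly identified the genuinely nonstandard ingredients --- the degenerate inertia, the non-coercive logarithmic source, and the Dafermos history variable --- and the role each plays in the argument.
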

\noindent The proof of the above theorem can be obtained by following similar lines given in \cite{bharu_suman}.
%\begin{lemma}
%\label{eps_1}
%Let $ \epsilon _{0} \in (0,1), $ then there exists $ d_{\epsilon _{0}}>0 $ such that 
%\begin{equation}
%s\left | \ln s \right |\leq s^{2}+d_{\epsilon _{0}}s^{1-\epsilon _{0}}  \forall s> 0
%\end{equation}
%\end{lemma}
\begin{lemma} (Cf. \cite{adel_M}, Lemma $3.1$)
	There exists a constant $M > 0$ such that 
	\begin{equation*}
	\int_{t}^{\infty}b(s)(\Delta \eta^{t}(x,s))^{2} \mathit{d}s\mathit{d}x \leq M f_{1}(t),
	\end{equation*}
	where $f_{1}(t)= \int_{0}^{\infty}b(s+t)(1+\left \| \Delta u_{0}(s) \right \|_2^{2})\mathit{d}s.$
\end{lemma}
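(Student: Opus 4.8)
The plan is to use that for $s\ge t$ the past--history relation determines $\eta^{t}$ explicitly. Indeed, by \eqref{R_1} together with the history condition $u(x,-\tau)=u_{0}(x,\tau)$, one has, for every $s\ge t$,
\begin{equation*}
\eta^{t}(x,s)=u(x,t)-u(x,t-s)=u(x,t)-u_{0}(x,s-t),
\end{equation*}
hence $\Delta\eta^{t}(x,s)=\Delta u(x,t)-\Delta u_{0}(x,s-t)$ on that range. This is precisely what makes the tail $\int_{t}^{\infty}$ tractable: there, $\Delta\eta^{t}$ only involves the current state $\Delta u(t)$ and the prescribed history $\Delta u_{0}$.

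First I would insert this identity into the integral and apply $(a-b)^{2}\le 2a^{2}+2b^{2}$ to get
\begin{equation*}
\int_{\Omega}\int_{t}^{\infty}b(s)\,|\Delta\eta^{t}(x,s)|^{2}\,ds\,dx\le 2\int_{\Omega}\int_{t}^{\infty}b(s)\,|\Delta u(x,t)|^{2}\,ds\,dx+2\int_{\Omega}\int_{t}^{\infty}b(s)\,|\Delta u_{0}(x,s-t)|^{2}\,ds\,dx.
\end{equation*}
Then the substitution $\sigma=s-t$ (so that $\int_{t}^{\infty}b(s)\,ds=\int_{0}^{\infty}b(\sigma+t)\,d\sigma$) and Fubini's theorem give
\begin{equation*}
\int_{\Omega}\int_{t}^{\infty}b(s)\,|\Delta\eta^{t}(x,s)|^{2}\,ds\,dx\le 2\|\Delta u(t)\|_{2}^{2}\int_{0}^{\infty}b(\sigma+t)\,d\sigma+2\int_{0}^{\infty}b(\sigma+t)\,\|\Delta u_{0}(\sigma)\|_{2}^{2}\,d\sigma.
\end{equation*}
Since $1\le 1+\|\Delta u_{0}(\sigma)\|_{2}^{2}$, both $\int_{0}^{\infty}b(\sigma+t)\,d\sigma$ and $\int_{0}^{\infty}b(\sigma+t)\,\|\Delta u_{0}(\sigma)\|_{2}^{2}\,d\sigma$ are bounded above by $f_{1}(t)$, so only a uniform (in $t$) bound on $\|\Delta u(t)\|_{2}^{2}$ remains.

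That uniform bound is the one genuine ingredient and is where I expect the main difficulty to sit. It should come from the energy: by \eqref{E'_t} we have $E(t)\le E(0)$, and, controlling the indefinite logarithmic term $-\tfrac{k}{2}\int_{\Omega}u^{2}\ln|u|\,dx$ appearing in \eqref{E_fn} via a logarithmic Sobolev inequality (absorbing a small multiple of $\|\Delta u\|_{2}^{2}$ into the left side), one concludes $\tfrac{l}{4}\|\Delta u(t)\|_{2}^{2}\le c\,E(t)\le c\,E(0)$ for a constant $c$ depending only on $\Omega,k,l$; thus $\|\Delta u(t)\|_{2}^{2}\le C_{1}$ for all $t\ge 0$. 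Combining the three estimates then yields the assertion with, e.g., $M=2C_{1}+2$. (If one argues only on a finite interval $[0,T]$ as in Theorem \ref{local_existence}, the needed bound $\|\Delta u(t)\|_{2}^{2}\le C_{1}$ on $[0,T]$ is immediate from $u\in C([0,T];H_{0}^{2}(\Omega))$, and the same computation applies.)
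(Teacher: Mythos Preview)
The paper does not supply its own proof of this lemma; it merely cites Lemma~3.1 of \cite{adel_M}. Your argument is the standard one and is essentially what that reference contains: for $s\ge t$ write $\Delta\eta^{t}(\cdot,s)=\Delta u(\cdot,t)-\Delta u_{0}(\cdot,s-t)$, apply $(a-b)^{2}\le 2a^{2}+2b^{2}$, shift $s\mapsto s-t$, and bound both pieces by $f_{1}(t)$ once a uniform estimate on $\|\Delta u(t)\|_{2}^{2}$ is in hand. So the structure is correct and matches the intended approach.

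One point deserves sharpening. Your route to the uniform bound $\|\Delta u(t)\|_{2}^{2}\le C_{1}$, via a logarithmic Sobolev inequality absorbing a small multiple of $\|\Delta u\|_{2}^{2}$, tacitly requires the coefficient $k$ of the logarithmic source to be small relative to the embedding constants; the paper makes no such smallness assumption. In this paper the bound is obtained instead through the potential--well hypothesis $0<E(0)<d$ (see the assumption in Lemma~\ref{LE_1} and the accompanying footnote): under that condition the solution remains in the stable set, which yields $\|\Delta u(t)\|_{2}^{2}\le cE(t)\le cE(0)$ for all $t\ge 0$. With this replacement for your log-Sobolev step, the proof is complete and gives $M=2\sup_{t\ge 0}\|\Delta u(t)\|_{2}^{2}+2$.
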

%\begin{lemma}
%Assume that $b$ satisfies (G1). Then, for $u \in H_{0}^{2}(\Omega)$,
%\begin{equation*}
%\int_{\Omega}\left (  \int_{0}^{\infty} b(s) (\eta^{t}) \mathit ds\right )^{2}\mathit dx \leq c(b \circ \Delta \eta^{t} ),
%\end{equation*}
%\begin{equation*}
%\int_{\Omega}\left (  \int_{0}^{\infty} b'(s)(\eta^{t})\mathit ds\right )^{2}\mathit dx \leq c(b' \circ \Delta \eta^{t})
%\end{equation*}
%\end{lemma}
\begin{lemma}
	\label{LE_1}
	Assume that (H1)-(H3) holds, for some $\epsilon_{0} \in (0,1)$ and $0<E(0)<d\footnote{where $d$ denotes the depth of the potential well, see \cite{bharu_suman} for such construction of $d.$}$. Define the functional's
	\begin{equation*}
	\psi _{1}(t):=\frac{1}{\rho+1}\int_{\Omega}\left | u_{t} \right |^{\rho}u_{t} u\mathit dx+\int_{\Omega}\Delta u \Delta u_{t} \mathit d x,
	\end{equation*}
	\begin{equation*}
	\psi_{2}(t):=-\int _{\Omega}\Big(\Delta^{2}u_{t}+\frac{1}{\rho+1}\left | u_{t} \right |^{\rho}u_{t}\Big)\int_{0}^{\infty}b(s)\eta^{t}(s) \mathit ds\mathit dx,
	\end{equation*}
	and $L(t):=mE(t)+\epsilon \psi_{1}(t)+\psi_{2}(t),$  where $m, \epsilon \geq 0$ then $L$ satisfies the following:\\
	(I)  $L \sim E$ ($i.e.,\ \alpha_1E(t)\leq L(t) \leq \alpha_2E(t),$ for some $\alpha_1,\alpha_2>0$), \\
	(II) $L'(t) \leq -mE(t)+c(b \circ \Delta \eta^{t})(t)+c(b \circ \Delta \eta^{t})^\frac{1}{1+\epsilon_{0}}(t)+c \int_{\Omega } h^2(u_{t}) \mathit{dx}.$
	
\end{lemma}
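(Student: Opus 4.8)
The plan is to treat $L$ as a perturbed (Lyapunov) energy and prove the two assertions in turn. For (I) it is enough to find $c>0$, independent of $t$, with $|\psi_1(t)|+|\psi_2(t)|\le c\,E(t)$; then $|L(t)-mE(t)|\le(\epsilon+1)c\,E(t)$ and the choice $m>(\epsilon+1)c$ yields $L\sim E$ with $\alpha_1=m-(\epsilon+1)c$, $\alpha_2=m+(\epsilon+1)c$. For $\psi_1$ I would bound $\big|\tfrac1{\rho+1}\int_\Omega|u_t|^\rho u_t\,u\,dx\big|$ by Young's inequality with the conjugate exponents $\tfrac{\rho+2}{\rho+1}$ and $\rho+2$, then use the embedding $H_0^2(\Omega)\hookrightarrow L^{\rho+2}(\Omega)$ (admissible under the stated restriction on $\rho$) and the uniform bound on $\|\Delta u(t)\|_2$ coming from $E'(t)\le0$ and $0<E(0)<d$; the term $\int_\Omega\Delta u\,\Delta u_t\,dx$ is controlled by $\tfrac12(\|\Delta u\|_2^2+\|\Delta u_t\|_2^2)$. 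For $\psi_2$ I would integrate by parts in $x$, so that $\int_\Omega\Delta^2u_t\big(\int_0^\infty b(s)\eta^t\,ds\big)dx=\int_\Omega\Delta u_t\big(\int_0^\infty b(s)\Delta\eta^t\,ds\big)dx$, and then use the Cauchy--Schwarz inequality in $s$ together with $\int_0^\infty b(s)\,ds=1-l$ to get $\big\|\int_0^\infty b(s)\Delta\eta^t\,ds\big\|_2^2\le(1-l)(b\circ\Delta\eta^t)$, handling the lower-order factor with Poincaré's inequality. All the resulting quantities are dominated by $E(t)$, which proves (I).

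For (II) I differentiate $L'=mE'+\epsilon\psi_1'+\psi_2'$ and use \eqref{E'_t}. Using $\tfrac{d}{dt}(|u_t|^\rho u_t)=(\rho+1)|u_t|^\rho u_{tt}$ and substituting $|u_t|^\rho u_{tt}+\Delta^2u_{tt}$ from the first equation of \eqref{sys_1}, the contributions $\pm\int_\Omega\Delta u\,\Delta u_{tt}\,dx$ cancel and one obtains
\[
\psi_1'(t)=\tfrac1{\rho+1}\|u_t\|_{\rho+2}^{\rho+2}+\|\Delta u_t\|_2^2-l\|\Delta u\|_2^2-\|u\|_2^2+\int_\Omega\!\Delta u\!\int_0^\infty\! b(s)\Delta\eta^t\,ds\,dx-\int_\Omega h(u_t)u\,dx+k\int_\Omega u^2\ln|u|\,dx,
\]
after which the memory integral is estimated by $\delta\|\Delta u\|_2^2+c_\delta(b\circ\Delta\eta^t)$ and $\int_\Omega h(u_t)u\,dx$ by $\delta\|\Delta u\|_2^2+c_\delta\int_\Omega h^2(u_t)\,dx$ (Poincaré and Young). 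Next, writing $z(t):=\int_0^\infty b(s)\eta^t(s)\,ds$, the identity $\eta_s^t+\eta_t^t=u_t$ with $\eta^t(0)=0$ and an integration by parts in $s$ gives $z_t=(1-l)u_t+\int_0^\infty b'(s)\eta^t(s)\,ds$; substituting the PDE once more and integrating by parts in $x$ produces the two favourable terms $-(1-l)\|\Delta u_t\|_2^2-\tfrac{1-l}{\rho+1}\|u_t\|_{\rho+2}^{\rho+2}$ together with errors of the types $l\int_\Omega\Delta u\,\Delta z\,dx$, $\|\Delta z\|_2^2$, $\int_\Omega u\,z\,dx$, $\int_\Omega\Delta u_t\int_0^\infty b'(s)\Delta\eta^t\,ds\,dx$, $\tfrac1{\rho+1}\int_\Omega|u_t|^\rho u_t\int_0^\infty b'(s)\eta^t\,ds\,dx$, $\int_\Omega h(u_t)z\,dx$ and $k\int_\Omega u\ln|u|\,z\,dx$. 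These are absorbed: the $\Delta z$ (resp. $z$) terms by $\delta(\|\Delta u\|_2^2+\|u\|_2^2)+c_\delta(b\circ\Delta\eta^t)$, using $\|\Delta z\|_2^2\le(1-l)(b\circ\Delta\eta^t)$; the $b'$-terms by $\delta(\|\Delta u_t\|_2^2+\|u_t\|_{\rho+2}^{\rho+2})+c_\delta(-b'\circ\Delta\eta^t)$, using $\int_0^\infty(-b'(s))\,ds=b(0)$; the $h(u_t)z$-term by $\delta\|u\|_2^2+c_\delta\int_\Omega h^2(u_t)\,dx$; and the logarithmic error term by the logarithmic Sobolev inequality and the a priori bound on $\|\Delta u(t)\|_2$.

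The argument is then closed by a judicious choice of parameters. Fixing $\epsilon$ so small that the wrong-sign terms $\epsilon\|\Delta u_t\|_2^2$ and $\tfrac\epsilon{\rho+1}\|u_t\|_{\rho+2}^{\rho+2}$ coming from $\epsilon\psi_1'$ are dominated by the favourable terms of $\psi_2'$, and then $\delta$ correspondingly small, the principal part of $mE'+\epsilon\psi_1'+\psi_2'$ becomes a negative multiple of $\|u_t\|_{\rho+2}^{\rho+2}+\|\Delta u_t\|_2^2+\|\Delta u\|_2^2+\|u\|_2^2$, which — after reinserting the logarithmic terms and invoking the logarithmic Sobolev inequality together with the potential-well coercivity supplied by $0<E(0)<d$ — dominates a positive multiple of $E(t)$. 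The leftover memory contributions are of the form $(b\circ\Delta\eta^t)$ and $(-b'\circ\Delta\eta^t)$: the first is retained and, where a fractional power is needed, replaced by $c(b\circ\Delta\eta^t)^{1/(1+\epsilon_0)}$ via Remark~\ref{b_1}, while the second is removed by taking $m$ large and exploiting the dissipation in \eqref{E'_t}. After relabelling constants this is precisely
\[
L'(t)\le-mE(t)+c(b\circ\Delta\eta^t)(t)+c(b\circ\Delta\eta^t)^{\frac1{1+\epsilon_0}}(t)+c\int_\Omega h^2(u_t)\,dx.
\]

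I expect the logarithmic nonlinearity to be the main obstacle. The terms $k\int_\Omega u^2\ln|u|\,dx$ (from $\psi_1'$) and $k\int_\Omega u\ln|u|\,z\,dx$ (from $\psi_2'$) are not sign-definite and must be reconciled with the term $-\tfrac k2\int_\Omega u^2\ln|u|\,dx$ sitting inside $E(t)$; keeping them under control forces one to use the logarithmic Sobolev inequality with a tunable constant together with the uniform bound on $\|\Delta u(t)\|_2$ (and the coercivity of the potential energy) coming from $0<E(0)<d$. A secondary, more bookkeeping, difficulty is to route the history terms so that the surviving memory contributions appear exactly in the two forms $(b\circ\Delta\eta^t)$ and $(b\circ\Delta\eta^t)^{1/(1+\epsilon_0)}$ of the statement; this is where Remark~\ref{b_1} and, for the $b'$-terms, hypothesis (H2) through \eqref{E'_t} come in.
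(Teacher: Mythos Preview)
The paper states Lemma~\ref{LE_1} without proof, so there is no argument to compare against; your sketch is the standard perturbed-energy (multiplier) method for this class of problems and is essentially the proof one would supply. The structure --- bound $|\psi_1|+|\psi_2|\le cE$ and choose $m$ large for (I); differentiate, substitute the first equation of \eqref{sys_1}, and absorb cross terms via Young/Cauchy--Schwarz for (II) --- is correct, as is your identification of the logarithmic terms and the routing of the $b'$-contributions into $mE'$ as the two nontrivial points.

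One small correction: in your estimate of $\int_\Omega h(u_t)\,z\,dx$ the $\delta$-term should carry $\|z\|_2^2$ (hence $c_\delta(b\circ\Delta\eta^t)$ after Poincar\'e), not $\|u\|_2^2$; this is harmless for the final inequality but worth fixing in a clean write-up.
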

\begin{lemma}  (Cf. \cite{gharabli_etal_2019}, Lemma $4.1$)
	Let $h$ satisfies $\mathrm{(H3)}$. Then, the solution of (\ref{eqn_s1}) satisfies
	\begin{equation*}
	\int_{\Omega} h^2(u_{t}) \mathit{dx} \leq c \Big( H^{-1}(G(t))-E'(t)\Big), \hspace{0.2cm}
	\end{equation*}
	where $G(t):=\frac{1}{\left | \Omega_{1} \right |}\int_{\Omega} u_{t}h(u_{t})\mathit{dx} \leq -cE'(t)$ and  ${\Omega_{1}=\{x \in \Omega: \left | u_{t} \right | \leq \epsilon}\}$ for some $ c,\epsilon >0. $
\end{lemma}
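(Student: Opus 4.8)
The plan is to split the domain as $\Omega=\Omega_1\cup\Omega_2$, where $\Omega_1=\{x\in\Omega:|u_t|\le\epsilon\}$ and $\Omega_2=\Omega\setminus\Omega_1$, and to estimate $\int_\Omega h^2(u_t)\,dx$ on each piece separately: on $\Omega_2$ the damping behaves linearly and one uses the lower bound in (H3), while on $\Omega_1$ the damping may be degenerate and the convex function $H$ together with Jensen's inequality does the work. Throughout I will use that (H3) forces $h(0)=0$, so, $h$ being nondecreasing, $u_th(u_t)=|u_t|\,|h(u_t)|\ge0$ pointwise, and that \eqref{E'_t} gives $\int_\Omega u_th(u_t)\,dx\le -E'(t)$.

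On $\Omega_2$: since $|u_t|>\epsilon$, the second line of (H3) yields $|h(u_t)|\le c_2|u_t|$ and $u_th(u_t)\ge c_1|u_t|^2$, hence $h^2(u_t)\le c_2^2|u_t|^2\le (c_2^2/c_1)\,u_th(u_t)$; integrating and invoking \eqref{E'_t},
\[
\int_{\Omega_2}h^2(u_t)\,dx\le c\int_\Omega u_th(u_t)\,dx\le -cE'(t).
\]
On $\Omega_1$: from the first line of (H3), $|h(u_t)|\le\tilde h^{-1}(|u_t|)$, equivalently $\tilde h(|h(u_t)|)\le|u_t|$. Using $H(t)=\sqrt t\,\tilde h(\sqrt t)$ this rewrites as $H(h^2(u_t))=|h(u_t)|\,\tilde h(|h(u_t)|)\le|u_t|\,|h(u_t)|=u_th(u_t)$. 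If $\epsilon$ is taken small enough that $\epsilon\,\tilde h^{-1}(\epsilon)\le r_2$, the arguments of $H$ lie in $(0,r_2]$, where $H$ (continuously extended by $H(0)=0$) is strictly increasing and hence invertible with $H^{-1}$ increasing and concave, being the inverse of a strictly convex increasing function. Applying $H^{-1}$ gives $h^2(u_t)\le H^{-1}(u_th(u_t))$ on $\Omega_1$, and Jensen's inequality for the concave map $H^{-1}$ then yields
\[
\int_{\Omega_1}h^2(u_t)\,dx\le\int_{\Omega_1}H^{-1}\big(u_th(u_t)\big)\,dx\le|\Omega_1|\,H^{-1}\!\Big(\tfrac{1}{|\Omega_1|}\int_{\Omega_1}u_th(u_t)\,dx\Big)=|\Omega_1|\,H^{-1}(G(t)).
\]
Adding the two bounds and absorbing $|\Omega_1|\le|\Omega|$ into $c$ gives $\int_\Omega h^2(u_t)\,dx\le cH^{-1}(G(t))-cE'(t)$; and $G(t)=\tfrac{1}{|\Omega_1|}\int_{\Omega_1}u_th(u_t)\,dx\le\tfrac{1}{|\Omega_1|}\int_\Omega u_th(u_t)\,dx\le -cE'(t)$ by \eqref{E'_t}, which is the claim.

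I expect the main obstacle to be the careful bookkeeping around the domain of $H$: one must ensure $u_th(u_t)$ (and $h^2(u_t)$) remain inside $(0,r_2]$ on $\Omega_1$ so that $H^{-1}$ is legitimately applied and Jensen's inequality is valid, which forces the smallness restriction on $\epsilon$ and may require first extending $H$ to a convex, increasing $C^2$ function on all of $(0,\infty)$. A secondary nuisance is the degenerate case $|\Omega_1|=0$ (and, more subtly, $|\Omega_1|$ small), where the factor $1/|\Omega_1|$ defining $G$ must be interpreted: if $|\Omega_1|=0$ the $\Omega_1$-contribution vanishes, and otherwise the crude bound $|\Omega_1|\le|\Omega|$ suffices since the $H^{-1}(G(t))$ term already carries the needed information.
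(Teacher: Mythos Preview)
Your proof is correct and follows the standard argument that the cited reference \cite{gharabli_etal_2019} uses; the paper itself does not reproduce a proof, so there is nothing further to compare. One cosmetic point: the paper defines $G(t)$ with the integral over all of $\Omega$, not over $\Omega_1$ as in your final line, but since $u_th(u_t)\ge0$ and $H^{-1}$ is increasing this only strengthens your inequality, so the discrepancy is harmless.
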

\begin{lemma} (Cf. \cite{MM_2020}, Lemma $3.3$)
	\label{int_b}
	Let $\delta_{0} > 0$  and assume that (H1)-(H2) holds, then we have $\forall t\geq 0,$
	\begin{equation*}
	\int_{0}^{t}b(s)\left | \Delta \eta^{t} \right |^2\mathit ds \leq \Big(\frac{1+t}{\delta _{0}}\Big )B^{-1}\Big(\frac{\delta _{0} \mu  (1+t)}{t\xi (t)}\Big ), 
	\end{equation*}
	where 
	\begin{equation*}
	\label{mu}
	\mu (t):=\int_{0}^{t}b'(s)\left | \Delta \eta^{t} \right |^2 \mathit ds \leq -cE'(t).
	\end{equation*}
\end{lemma}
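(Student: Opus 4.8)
\noindent\emph{Proof strategy.} The plan is to turn the relaxation inequality of (H2) into a pointwise bound for $b(s)$ and then average it, via Jensen's inequality for the concave function $B^{-1}$, against the weight $\|\Delta\eta^t(s)\|_2^2\,ds$. First I would record that, since $B$ is increasing on $(0,r_1]$ with $B(0)=B'(0)=0$, its inverse $B^{-1}$ (extended affinely past $r_1$) is increasing and concave on $(0,\infty)$ with $B^{-1}(0)=0$; then, from $b'(s)\le-\xi(s)B(b(s))$ with $\xi(s)>0$, one gets $B(b(s))\le-b'(s)/\xi(s)$ and hence
\[
b(s)\ \le\ B^{-1}\!\Big(\tfrac{-b'(s)}{\xi(s)}\Big),\qquad s\ge0 .
\]
(On the bounded initial interval where $b(s)>r_1$, which exists because $b$ is nonincreasing with $b(0)$ finite, one either invokes this via the affine extension or estimates that part of the integral by the constant $b(0)\int_0^{s_0}\|\Delta\eta^t(s)\|_2^2\,ds$.)

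Next I would multiply by $\|\Delta\eta^t(s)\|_2^2$, integrate over $s\in[0,t]$, and use that $\xi$ is nonincreasing — so $\xi(s)\ge\xi(t)$ for $s\le t$ — together with the monotonicity of $B^{-1}$, to get
\[
\int_0^t b(s)\,\|\Delta\eta^t(s)\|_2^2\,ds\ \le\ \int_0^t B^{-1}\!\Big(\tfrac{-b'(s)}{\xi(t)}\Big)\,\|\Delta\eta^t(s)\|_2^2\,ds .
\]
I would then apply Jensen's inequality to the concave map $B^{-1}$ with respect to the finite measure $d\nu(s)=\|\Delta\eta^t(s)\|_2^2\,ds$ on $[0,t]$. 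Its total mass $\lambda(t):=\int_0^t\|\Delta\eta^t(s)\|_2^2\,ds$ is finite and grows at most linearly: from $\|\Delta\eta^t(s)\|_2^2\le2\|\Delta u(t)\|_2^2+2\|\Delta u(t-s)\|_2^2$ and $\tfrac{l}{2}\|\Delta u(\tau)\|_2^2\le E(\tau)\le E(0)$ one gets $\lambda(t)\le C(1+t)$, so $\lambda(t)\le(1+t)/\delta_0$ for $\delta_0$ below a threshold fixed by the data (which is the range in which the estimate is used later). Writing $\mu(t):=\int_0^t|b'(s)|\,\|\Delta\eta^t(s)\|_2^2\,ds\ge0$, Jensen yields
\[
\int_0^t b(s)\,\|\Delta\eta^t(s)\|_2^2\,ds\ \le\ \lambda(t)\,B^{-1}\!\Big(\frac{1}{\lambda(t)\,\xi(t)}\int_0^t|b'(s)|\,\|\Delta\eta^t(s)\|_2^2\,ds\Big)=\lambda(t)\,B^{-1}\!\Big(\frac{\mu(t)}{\xi(t)\,\lambda(t)}\Big).
\]

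To conclude I would use the elementary fact that $x\mapsto x\,B^{-1}(a/x)$ is nondecreasing on $(0,\infty)$ for each fixed $a>0$ — an immediate consequence of the concavity of $B^{-1}$ and $B^{-1}(0)=0$ (equivalently, of $y\mapsto B^{-1}(y)/y$ being nonincreasing). Since $\lambda(t)\le(1+t)/\delta_0$, this upgrades the last inequality to $\tfrac{1+t}{\delta_0}B^{-1}\!\big(\tfrac{\delta_0\mu(t)}{\xi(t)(1+t)}\big)$; then, using $\tfrac{1}{1+t}\le\tfrac{1+t}{t}$ and the monotonicity of $B^{-1}$, one obtains the claimed bound $\tfrac{1+t}{\delta_0}B^{-1}\!\big(\tfrac{\delta_0\mu(t)(1+t)}{t\,\xi(t)}\big)$. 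The auxiliary claim $\mu(t)\le-cE'(t)$ I would get by differentiating $E$ in \eqref{E_fn} and using \eqref{E'_t}: the dissipative memory part of $E'(t)$ is $\tfrac12(b'\circ\Delta\eta^t)(t)=\tfrac12\int_0^\infty b'(s)\|\Delta\eta^t(s)\|_2^2\,ds\le\tfrac12\int_0^t b'(s)\|\Delta\eta^t(s)\|_2^2\,ds=-\tfrac12\mu(t)$ because $b'\le0$, while the damping contribution $-\int_\Omega u_th(u_t)\,dx\le0$; hence $\mu(t)\le-2E'(t)$.

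The step I expect to be the main obstacle is the bookkeeping around Jensen's inequality: the weight $\|\Delta\eta^t(s)\|_2^2$ must do double duty — it is the density of the averaging measure, which is what produces the prefactor $\lambda(t)\le(1+t)/\delta_0$, and it is simultaneously the weight that assembles $\mu(t)$ inside $B^{-1}$ — so the measure has to be chosen so that both roles are served, while keeping the argument of $B^{-1}$ inside the domain where it is concave. The monotonicity of $x\mapsto xB^{-1}(a/x)$ is the small technical lemma that makes the passage from $\lambda(t)$ to $(1+t)/\delta_0$ clean, and the treatment of the initial range of $s$ where $b(s)$ may exceed $r_1$ (affine extension of $B$, or splitting off that bounded interval) is the other point that needs some care.
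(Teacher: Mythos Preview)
The paper does not give its own proof of this lemma; it simply cites \cite{MM_2020}, Lemma~3.3. Your argument is correct and is essentially the one found there: bound $\lambda(t):=\int_0^t\|\Delta\eta^t(s)\|_2^2\,ds$ linearly in $t$ via the energy, and apply Jensen's inequality with $\|\Delta\eta^t(s)\|_2^2\,ds$ as the averaging measure. The only cosmetic difference is organizational: the cited proof applies Jensen to the \emph{convex} function $B$ --- after first scaling the argument by $\delta_0\lambda(t)/(1+t)\le 1$ and invoking Remark~\ref{Remark_1} so that the factor $(1+t)/\delta_0$ is built in from the start --- and then inverts, whereas you apply Jensen to the \emph{concave} function $B^{-1}$ and afterwards use the monotonicity of $x\mapsto xB^{-1}(a/x)$ to pass from $\lambda(t)$ to $(1+t)/\delta_0$. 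The two manoeuvres are dual to one another and impose the same smallness restriction on $\delta_0$.
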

%%%%%%%%%%%%%%%%%%%%%%%%%%%%%%%%%%%%%%%%%%%
\section{Decay result}
We state and prove the major result in this section. We also provide an example to demonstrate the decay rate result. We introduce a few notations and a function for this purpose:
\begin{equation}
\label{w_23}
W_{1}(t):=\int_{t}^{1}\frac{1}{s W'(s)}\mathit{ds}, \quad W_{2}(t)=t W'(t), \quad W_{3}(t)=t(W')^{-1}(t), 
\end{equation}
where $W(t)=((B^{-1}(t))^\frac{1}{1+\epsilon_{0}}+H^{-1})^{-1}.$ Further, denote $\mathcal{S}$ to be the class of functions $\chi:(0, \infty) \rightarrow [0, \infty)$ be a $C^1$ function satisfying $\chi \leq 1\ {\rm and} \  \chi' \leq 0$. Also, for fixed $C_{1},C_{2} >0,$ assume that $\chi$ satisfies the estimate:
\begin{equation}
\label{w_3}
C_{2}W_{3}^{*}\Big [\frac{c}{\delta_{1}}q(t)f_{1}^{\frac{1}{1+\epsilon_{0}}}(t)\Big ] \leq C_{1}\Big ( W_{2}\Big (\frac{W_{4}(t)}{\chi(t)}\Big )-\frac{W_{2}(W_{4}(t))}{\chi(t)}\Big ),
\end{equation}
where $W_{3}^{*}$ (defined in \eqref{young_def}) is convex conjugate of $W_{3}$. Let $\delta_{1}, c>0$ are generic constants, $q(t)=(t+1)^{-\frac{1}{1+\epsilon_{0}}}$  and 
\begin{equation}
\label{w_4}
W_{4}(t)=W_{1}^{-1}\Big(C_{1}\int_{0}^{t} \xi(s) \mathit{ds}\Big ).
\end{equation}
\textbf{Note:} For $\epsilon$ small enough with $0<\epsilon \leq 1,\ \epsilon W_{4}(s)\in \mathcal{S}.$ So, the set $\mathcal{S}$ is non empty.
\begin{theorem}
	Under the hypothesis of Lemma \ref{LE_1}, there exists a constant $C>0$ such that the solution to problem  (\ref{eqn_s1}) satisfies, \\
	\begin{equation}
	\label{energy_est}
	E(t) \leq \frac{C W_{4}(t)}{\chi(t)q(t)}, \hspace{0.2cm} \forall t \geq 0.
	\end{equation}
\end{theorem}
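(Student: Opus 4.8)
The proof runs along the Lyapunov-functional scheme of Mahdi \cite{adel_M} and Mustafa \cite{MM_2020}, extended to absorb the nonlinear damping through the function $H$ of (H3). The plan is to upgrade the functional $L$ of Lemma \ref{LE_1} to a functional equivalent to $E$ whose derivative is $-mE$ plus terms that are either controlled by the decaying datum $f_1$ or absorbable after a convexity argument, and then to integrate the resulting first-order inequality by means of $W_1$.

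First I would take $L(t)=mE(t)+\epsilon\psi_1(t)+\psi_2(t)$ from Lemma \ref{LE_1}, insert the damping bound $\int_\Omega h^2(u_t)\,dx\le c\big(H^{-1}(G(t))-E'(t)\big)$, and use Remark \ref{b_1} to dominate $c\,(b\circ\Delta\eta^t)$ by $c\,(b\circ\Delta\eta^t)^{1/(1+\epsilon_0)}$; after adding a multiple of $E$ to cancel the explicit $-E'$ term this gives $\mathcal F\sim E$ with
\begin{equation*}
\mathcal F'(t)\le -mE(t)+c\,(b\circ\Delta\eta^t)^{\frac{1}{1+\epsilon_0}}(t)+c\,H^{-1}(G(t)).
\end{equation*}
Next I would split $(b\circ\Delta\eta^t)(t)=\int_0^t b(s)|\Delta\eta^t|^2\,ds+\int_t^\infty b(s)|\Delta\eta^t|^2\,ds$: by subadditivity of $x\mapsto x^{1/(1+\epsilon_0)}$ the tail becomes, via the Lemma bounding $\int_t^\infty b(s)(\Delta\eta^t)^2\,ds$ by $Mf_1(t)$, a term $c\,f_1^{1/(1+\epsilon_0)}(t)$, while Lemma \ref{int_b} bounds the principal part by $c\,(1+t)^{1/(1+\epsilon_0)}\big(B^{-1}(\tfrac{\delta_0\mu(1+t)}{t\xi(t)})\big)^{1/(1+\epsilon_0)}$ with $\mu\le -cE'$. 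Multiplying by $q(t)=(1+t)^{-1/(1+\epsilon_0)}$ removes the growing factor (note $(q\mathcal F)'\le q\mathcal F'$ since $q'\le0$ and $\mathcal F>0$), and the monotonicity of $B^{-1}$ and $H^{-1}$ together with the definition $W^{-1}=(B^{-1})^{1/(1+\epsilon_0)}+H^{-1}$ lets me merge the memory and damping nonlinearities into a single function; putting the remaining $-E'$ contributions into the argument of $W^{-1}$ I reach $\widetilde{\mathcal F}\sim qE$ with
\begin{equation*}
\widetilde{\mathcal F}'(t)\le -m\,q(t)E(t)+c\,q(t)f_1^{\frac{1}{1+\epsilon_0}}(t)+c\,W^{-1}\!\Big(\tfrac{c}{\xi(t)}\big(-E'(t)\big)\Big)
\end{equation*}
for $t$ away from $0$ (the factor $\tfrac{1+t}{t}$ being harmless there, and $[0,1]$ trivial).

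The core is the convexity step. Following \cite{adel_M}, I would bring in the weight $\chi\in\mathcal S$ and the nondecreasing function $W'$, form the functional $\chi(t)\,W'\!\big(\bar\epsilon\,q(t)E(t)/E(0)\big)\,\widetilde{\mathcal F}(t)$ (nonincreasing, since $qE$ is nonincreasing and $\chi'\le0$), differentiate it, and feed in the previous inequality. The cross terms are estimated by the generalised Young inequality for the conjugate pair attached to $W_2(t)=tW'(t)$ and $W_3(t)=t(W')^{-1}(t)$ (using $W_3(W'(x))=W_2(x)$ and $W^{**}=W$): the $W^{-1}$-term yields a further $-E'$ residual (absorbed into the functional) plus a term of order $\xi(t)\,W_2\big(\bar\epsilon qE/E(0)\big)$, which for $\bar\epsilon$ small is dominated by $\tfrac m2 q(t)E(t)$; the $f_1$-term yields $C_2\,W_3^*\!\big[\tfrac{c}{\delta_1}q(t)f_1^{1/(1+\epsilon_0)}(t)\big]$. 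At this point hypothesis \eqref{w_3} enters decisively: it says exactly that this $W_3^*$-term is controlled by the slack $C_1\big(W_2(W_4/\chi)-W_2(W_4)/\chi\big)$, which is precisely what is needed so that, after relabelling constants and using once more $\widetilde{\mathcal F}\sim qE$, the functional $y(t)\sim\chi(t)q(t)E(t)$ satisfies
\begin{equation*}
y'(t)\le -C_1\,\xi(t)\,W_2\big(y(t)\big),\qquad t\ge0.
\end{equation*}

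Finally, since $W_2(y)=yW'(y)$ and $W_1(t)=\int_t^1\frac{ds}{sW'(s)}$, this inequality reads $\frac{d}{dt}\big[{-}W_1(y(t))\big]\le -C_1\xi(t)$; integrating over $(0,t)$ and using that $W_1$ is strictly decreasing gives $y(t)\le W_1^{-1}\!\big(C_1\int_0^t\xi(s)\,ds\big)=W_4(t)$, i.e. $\chi(t)q(t)E(t)\le C\,W_4(t)$, which is \eqref{energy_est}. The main obstacle is the third paragraph: one must calibrate $\bar\epsilon$, the normalisation inside $W'$, and the way $\chi$ is inserted so that each Young splitting leaves only an absorbable $-E'$-residual and the single favourable term $-C_1\xi W_2(y)$, with the non-absorbable remnant being exactly the quantity that \eqref{w_3} was designed to dominate; keeping the $(1+t)$-powers matched against $q$, re-checking $\widetilde{\mathcal F}\sim qE$ after each multiplication, and disposing of the compact interval $[0,1]$ separately are the technical points where care is required.
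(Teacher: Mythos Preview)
Your scheme tracks the paper closely through the construction of $L_1=L+cE$, the near/far splitting of the memory, the merging of the $(B^{-1})^{1/(1+\epsilon_0)}$ and $H^{-1}$ contributions into a single $W^{-1}$, the weight $W'(\epsilon_1 qE/E(0))$, and the two Young inequalities (for the $W^{-1}$-term and then, via $W_3,W_3^*$, for the $f_1$-term). The gap is in the final reduction. You assert that after invoking \eqref{w_3} the functional $y\sim\chi qE$ satisfies $y'\le -C_1\xi W_2(y)$, but this does not follow from the mechanism you describe. With $L_4\sim qE$ obeying $L_4'\le -C_1\xi W_2(L_4)+C_2\xi W_3^*[\cdot]$ and $y=\chi L_4$, convexity gives only $y'\le -C_1\xi W_2(y)+C_2\xi\chi W_3^*[\cdot]$; feeding in \eqref{w_3} then leaves the residual $C_1\xi\big(\chi W_2(W_4/\chi)-W_2(W_4)\big)$, which is \emph{nonnegative} (again by convexity of $W_2$ and $0<\chi\le1$) and hence cannot be dropped. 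Inserting $\chi$ earlier, as a factor in front of $W'(\cdot)\widetilde{\mathcal F}$, does not change this accounting.

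The paper closes precisely this hole with two devices you omit. First, a dichotomy: if $\alpha_0 q(t)E(t)\le 2W_4(t)/\chi(t)$ the bound \eqref{energy_est} is immediate, so one may work under the standing assumption $L_4(s)>2W_4(s)/\chi(s)$ on $[0,t]$. Second, in place of $\chi L_4$ the paper uses $L_5:=\epsilon_2\chi L_4-\epsilon_2 W_4$; differentiation produces the extra term $-\epsilon_2 W_4'=\epsilon_2 C_1\xi W_2(W_4)$, which exactly cancels the $-W_2(W_4)$ piece of the residual above, while the Case~II assumption ensures $W_4/\chi<L_4-W_4/\chi<L_4$ and hence (since $W'$ is increasing) the pointwise estimate $W_2(L_5)\le \epsilon_2\chi W_2(L_4)-\epsilon_2\chi W_2(W_4/\chi)$, which absorbs the remaining $+\chi W_2(W_4/\chi)$ piece. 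Only then does one obtain $L_5'\le -C_1\xi W_2(L_5)$, and integration through $W_1$ gives $L_5\le W_4$, whence \eqref{energy_est}. Your outline needs this subtract-and-split mechanism; the rest is essentially correct.
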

\begin{proof}
	Using Lemmas (\ref{b_1})- (\ref{int_b}), note that
	\begin{equation*}
	\begin{array}{ll}
	L'(t) \leq -mE(t)+\Big[\Big(\frac{t+1}{\delta _{0}}\Big )B^{-1}\Big(\frac{\delta _{0} \mu  (t)}{(t+1)\xi (t)}\Big )\Big]^{\frac{1}{1+\epsilon_{0}}} \medskip \\
	\hspace{1.5cm}+\Big[c\int_{0}^{\infty}b(t+s)(1+\left \| \Delta u_{0}(s) \right \|_2^{2})\mathit{d}s\Big]^\frac{1}{1+\epsilon_{0}}+cH^{-1}(G(t))-cE'(t).
	\end{array}
	\end{equation*}\\
	Assume $L_{1}(t):=(L+cE)(t)$, then the above inequality can be written as
	\begin{equation*}
	\begin{array}{ll}
	L_{1}'(t) \leq -mE(t)+c\Big[\Big(\frac{t+1}{\delta _{0}}\Big )B^{-1}\Big(\frac{\delta _{0} \mu  (t)}{(t+1)\xi (t)}\Big )\Big]^{\frac{1}{1+\epsilon_{0}}} +\big[cf_1(t)\big]^\frac{1}{1+\epsilon_{0}}+cH^{-1}(G(t)).
	\end{array}
	\end{equation*}
	Using Remark (\ref{Remark_1}) with $\theta=\frac{1}{t+1}$ for all $t>0,$ we obtain\\
	\begin{equation*}
	B^{-1}\Big(\frac{\delta _{0} \mu  (t)}{(t+1)\xi (t)}\Big )\leq B^{-1}\Big(\frac{\delta _{0}q(t) \mu  (t)}{\xi (t)}\Big ).
	\end{equation*}
	Hence,
	\begin{equation}
	\begin{array}{ll}
	\label{ss_1}
	L_{1}'(t) \leq -mE(t)+c\Big(\frac{t+1}{\delta _{0}}\Big )^{\frac{1}{1+\epsilon_{0}}}B^{-1}\Big(\frac{\delta _{0}q(t) \mu  (t)}{\xi (t)}\Big )^{\frac{1}{1+\epsilon_{0}}} +\big[cf_1(t)\big]^\frac{1}{1+\epsilon_{0}}+\frac{c}{\delta _0 q(t)} H^{-1}(G(t)q(t)).
	\end{array}
	\end{equation}
	Denote\quad	$\beta(t):=\mathrm{max}\Big (\frac{\delta _{0}q(t) \mu  (t)}{\xi (t)}, G(t)q(t) \Big )$ and recall  $W(t)=((B^{-1})^{\frac{1}{1+\epsilon_{0}}}+H^{-1})^{-1}(t).$\\
	Then (\ref{ss_1}) becomes, for any $t\geq 0$ and $\epsilon_{0} \in (0, 1),$
	\begin{equation*}
	\begin{array}{ll}
	\label{ss_2}
	L_{1}'(t) \leq -mE(t)+c\frac{1}{\delta _{0}q(t)}W^{-1}(\beta(t))+cf_{1}^{\frac{1}{1+\epsilon_{0}}}.
	\end{array}
	\end{equation*}
	Let $0<\epsilon_{1} <r:=\min\{r_1,r_2\}$, define the functional $L_{2}$ as
	\begin{equation*}
	\label{l2}
	L_{2}(t):=W'\Big( {\epsilon_{1}q(t)}\frac{E(t)}{E(0)}\Big ) L_{1}(t),
	\end{equation*}
	then it is easy to observe that $L_{2} \sim E$, also
	\begin{equation}
	\begin{array}{ll}
	\label{ss_3}
	L_{2}'(t) \leq -mE(t)W'\Big( {\epsilon_{1}q(t)}\frac{E(t)}{E(0)}\Big )+c\frac{1}{\delta _{0}q(t)}W'\Big( {\epsilon_{1}q(t)}\frac{E(t)}{E(0)}\Big )W^{-1}(\beta(t)) +cf_{1}^{\frac{1}{1+\epsilon_{0}}}W'\Big( {\epsilon_{1}q(t)}\frac{E(t)}{E(0)}\Big ),
	\end{array}
	\end{equation}
	Let the convex conjugate of $W$ is denoted by $W^{*}$ and is defined as
	\begin{equation}
	\label{young_def}
	W^{*}(\tau)=\tau(W')^{-1}(\tau)-W[(W')^{-1}(\tau)],\quad \tau \in (0, W'(r)].
	\end{equation}
	Using Generalized Young's inequality, $W^*$ satisfies the following estimate
	\begin{equation}
	\label{young}
	\tilde{A}\tilde{B} \leq W^{*}(\tilde{A})+W(\tilde{B}), \quad \tilde{A} \in (0,W'(r)],\ {\rm and}\  \tilde{B} \in (0,r ].
	\end{equation}
	Therefore with $\tilde{A}=W'\Big( {\epsilon_{1}q(t)}\frac{E(t)}{E(0)}\Big )$ and $\tilde{B}=W^{-1}(\beta(t))$, \eqref{ss_3} leads to
	\begin{equation*}
	\begin{array}{ll}
	\label{ss_4}
	L_{2}'(t) \leq -mE(t)W'\Big( {\epsilon_{1}q(t)}\frac{E(t)}{E(0)}\Big ) +\frac{c}{\delta _{0}q(t)}W^{*}\Big(W'\Big( {\epsilon_{1}q(t)}\frac{E(t)}{E(0)}\Big )\Big ) +c\frac{\beta(t)}{\delta_{0}q(t)}+cf_{1}^{\frac{1}{1+\epsilon_{0}}}W'\Big( {\epsilon_{1}q(t)}\frac{E(t)}{E(0)}\Big ).
	\end{array}
	\end{equation*}
	Multiplying the above equation by $\xi (t)$, we get
	\begin{equation*}
	\begin{array}{ll}
	\label{ss_5}
	\xi (t)L_{2}'(t) &\leq -m \xi (t)E(t)W'\Big( {\epsilon_{1}q(t)}\frac{E(t)}{E(0)}\Big ) +\frac{c \xi (t)}{\delta _{0}q(t)}W^{*}\Big(W'\Big( {\epsilon_{1}q(t)}\frac{E(t)}{E(0)}\Big )\Big )\medskip \\
	&\hspace{0.5cm} +c \xi (t)\frac{\beta(t)}{\delta_{0}q(t)}+c \xi(t)f_{1}^{\frac{1}{1+\epsilon_{0}}}W'\Big( {\epsilon_{1}q(t)}\frac{E(t)}{E(0)}\Big ).
	\end{array}
	\end{equation*}
	Define a functional $L_{3}:=\xi L_{2}+cE \sim E$. Since $\xi (t) \beta (t) \leq -cE'(t)$ and  $W^{*}(W'(t)) \leq tW'(t)$,  we obtain
	\begin{equation*}
	\begin{array}{ll}
	\label{ss_6}
	L_{3}'(t) & \leq -m E(t)\xi (t)W'\Big( {\epsilon_{1}q(t)}\frac{E(t)}{E(0)}\Big ) + \frac{c}{\delta _{0}}\epsilon_{1}\xi (t)\frac{E(t)}{E(0)}W'\Big( {\epsilon_{1}q(t)}\frac{E(t)}{E(0)}\Big ) +c \xi(t)f_{1}^{\frac{1}{1+\epsilon_{0}}}W'\Big( {\epsilon_{1}q(t)}\frac{E(t)}{E(0)}\Big ) \medskip \\
	& \leq -\Big (\frac{mE(0)}{\epsilon_{1}}-\frac{c}{\delta_{0}}\Big) \epsilon_{1} \xi (t) \frac{E(t)}{E(0)}W'\Big( {\epsilon_{1}q(t)}\frac{E(t)}{E(0)}\Big ) +c \xi(t)f_{1}^{\frac{1}{1+\epsilon_{0}}}W'\Big( {\epsilon_{1}q(t)}\frac{E(t)}{E(0)}\Big ).
	\end{array}
	\end{equation*}
	Consequently, from \eqref{w_23} and choosing $\epsilon_{1}$ such that $k:=\Big(\frac{mE(0)}{\epsilon_{1}}-c \Big)>0,$ we obtain
	\begin{equation}
	\begin{array}{ll}
	\label{ss_40}
	L_{3}'(t) & \leq -k \epsilon_{1}\xi (t) \frac{E(t)}{E(0)}W'\Big( {\epsilon_{1}q(t)}\frac{E(t)}{E(0)}\Big ) +c \xi(t)f_{1}^{\frac{1}{1+\epsilon_{0}}}W'\Big( {\epsilon_{1}q(t)}\frac{E(t)}{E(0)}\Big )\medskip \\
	&\leq -k  \frac{\xi(t)}{q(t)}W_{2}\Big( {\epsilon_{1}q(t)}\frac{E(t)}{E(0)}\Big )+c \xi(t)f_{1}^{\frac{1}{1+\epsilon_{0}}}W'\Big( {\epsilon_{1}q(t)}\frac{E(t)}{E(0)}\Big ).
	\end{array}
	\end{equation}
	Since $W'_{2}(t)=W'(t)+tW''(t)$, using the property of  $W$, we conclude that $W_{2}(t), W'_{2}(t)>0$ on $(0,r]$. Making use of (\ref{young}) with $\tilde{A}=W'\Big( {\epsilon_{1}q(t)}\frac{E(t)}{E(0)}\Big )$ and $\tilde{B}=\Big [\frac{c}{\delta_{1}}f^{\frac{1}{1+\epsilon_{0}}}_{1}\Big]$ where $\delta_{1}>0$, we get
	\begin{equation}
	\begin{array}{ll}
	\label{ss_8}
	cf^{\frac{1}{1+\epsilon_{0}}}_{1}W'\Big( {\epsilon_{1}q(t)}\frac{E(t)}{E(0)}\Big )&=\frac{\delta_{1}}{q(t)}\Big[\frac{c}{\delta_{1}}q(t)f_{1}^{\frac{1}{1+\epsilon_{0}}}\Big]W'\Big( {\epsilon_{1}q(t)}\frac{E(t)}{E(0)}\Big ) \medskip \\
	&\leq \frac{\delta_{1}}{q(t)}W_{3}\Big(W'\Big( {\epsilon_{1}q(t)}\frac{E(t)}{E(0)}\Big )\Big)+\frac{\delta_{1}}{q(t)} W^{*}\Big[\frac{c}{\delta_{1}}q(t)f_{1}^{\frac{1}{1+\epsilon_{0}}}\Big] \medskip \\
	&\leq \frac{\delta_{1}}{q(t)}\Big( {\epsilon_{1}q(t)}\frac{E(t)}{E(0)}\Big )W'\Big( {\epsilon_{1}q(t)}\frac{E(t)}{E(0)}\Big )+ \frac{\delta_{1}}{q(t)}W_{3}^{*}\Big[\frac{c}{\delta_{1}}q(t)f_{1}^{\frac{1}{1+\epsilon_{0}}}\Big] \medskip \\
	&\leq \frac{\delta_{1}}{q(t)} W_{2}\Big( {\epsilon_{1}q(t)}\frac{E(t)}{E(0)}\Big )+\frac{\delta_{1}}{q(t)}W_{3}^{*}\Big[\frac{c}{\delta_{1}}q(t)f_{1}^{\frac{1}{1+\epsilon_{0}}}\Big].
	\end{array}
	\end{equation}
	Now, combining (\ref{ss_40}) and (\ref{ss_8}), choose $\delta_{1}$ small enough so that $k_{1}=(k-\delta_{1})>0$, we have 
	\begin{equation}
	\begin{array}{ll}
	\label{ss_9}
	L'_{3}(t) \leq-k_{1}\frac{\xi (t)}{q(t)}W_{2}\Big( {\epsilon_{1}q(t)}\frac{E(t)}{E(0)}\Big )+\frac{\delta_{1}\xi (t)}{q(t)}W_{3}^{*}\Big[\frac{c}{\delta_{1}}q(t)f_{1}^{\frac{1}{1+\epsilon_{0}}}\Big],
	\end{array}
	\end{equation}
	using nonincreasing property of $W_{2}$ and for some $\gamma_{1}$, $\gamma_{2}>0$ which satisfies
	$\gamma_1L_{3}(t) \leq E(t) \leq \gamma_{2}L_{3}(t).$ We have, for some $\alpha=\frac{\gamma_{1}}{E(0)}>0,$
	\begin{equation*}
	\begin{array}{ll}
	W_{2}\Big( {\epsilon_{1}q(t)}\frac{E(t)}{E(0)}\Big ) \geq W_{2}\alpha L_{3}(t)q(t).	
	\end{array}
	\end{equation*}
	Assume $L_{4}(t):=\alpha L_{3}(t)q(t)$, then using \eqref{ss_9} we have 
	\begin{equation}
	\begin{array}{ll}
	\label{ss_10}
	L'_{4}(t) & \leq\alpha q(t)\Big(-k_{1}\frac{\xi (t)}{q(t)}W_{2}\Big( {\epsilon_{1}q(t)}\frac{E(t)}{E(0)}\Big )+\frac{\delta_{1}\xi (t)}{q(t)}W_{3}^{*}\Big[\frac{c}{\delta_{1}}q(t)f_{1}^{\frac{1}{1+\epsilon_{0}}}\Big]\Big) \medskip \\
	& \leq  -C_{1}\xi (t)W_{2}(L_{4}(t))+C_{2}\xi (t) W_{3}^{*}\Big[\frac{c}{\delta_{1}}q(t)f_{1}^{\frac{1}{1+\epsilon_{0}}}\Big ],
	\end{array}
	\end{equation}
	where $C_{1}=\alpha k_{1}>0$ and $C_{2}=\alpha \delta_{1}>0$. Since, $L_{3}\sim E,$ we have $L_{4}(t) \leq \alpha_{0}E(t)q(t)$ for some $\alpha_{0}>0.$  From the definition of $\chi,$ we can estimate $E(t)$ by considering two cases:\medskip \\
	\textbf{Case I:} If $\alpha_{0}E(t)q(t) \leq 2\frac{W_{4}(t)}{\chi(t)} $, then we get
	\begin{equation}
	\label{Egeq}
	E(t)\leq \Big(\frac{2}{\alpha_{0}}\Big)\frac{W_{4}(t)}{\chi(t)q(t)}.
	\end{equation}
	\textbf{Case II:} If $\alpha_{0}E(t)q(t) \geq 2\frac{W_{4}(t)}{\chi(t)} $, then observe that for any $0 \leq s \leq t,$ we have $\alpha_{0}q(s)E(s)>2\frac{W_{4}(s)}{\chi(s)}$. Therefore, we get
	\begin{equation}
	\label{Eleq}
	L_4(s)> 2\frac{W_{4}(s)}{\chi(s)}, \quad 0 \leq s \leq t.
	\end{equation}
	Using Remark (\ref{Remark_1}), $0 < \chi(t) \leq 1$ and the property of $W_{2}$, we have for any $0<\epsilon_{2} \leq 1,$ and $ 0 \leq s \leq t,$
	\begin{equation*}
	\begin{array}{ll}
	\label{ss_11}
	W_{2}(\epsilon_{2}\chi(s) L_{4}(s)-\epsilon_{2}W_{4}(s))& =W_{2}\Big (\epsilon_{2}\chi(s) L_{4}(s)-\frac{\epsilon_{2}\chi(s)W_{4}(s)}{\chi(s)}\Big) \medskip \\
	&\leq \epsilon_{2}\chi (s) W_{2}\Big( L_{4}(s)-\frac{W_{4}(s)}{\chi (s)}\Big )\medskip \\
	&\leq \epsilon_{2}\chi (s)\Big( L_{4}(s)-\frac{W_{4}(s)}{\chi (s)}\Big )W'\Big( L_{4}(s)-\frac{W_{4}(s)}{\chi (s)}\Big ).
	\end{array}
	\end{equation*}
	Using (\ref{Eleq}), we have
	\begin{equation}
	\begin{array}{ll}
	\label{ss_12}
	W_{2}(\epsilon_{2}\chi(t) L_{4}(s)-\epsilon_{2}W_{4}(s)) \leq \epsilon_{2}\chi(s)L_{4}(s)W'(L_{4}(s))-\epsilon_{2}\chi(s)\frac{W_{4}(s)}{\chi(s)}W'\Big(\frac{W_{4}(s)}{\chi(s)}\Big), \quad 0 \leq s \leq t.
	\end{array}
	\end{equation}
	Denote the functional $L_5$ as
	\begin{equation}
	\label{L5}
	L_{5}(t):=\epsilon_{2}\chi (t)L_{4}(t)-\epsilon_{2}W_{4}(t),
	\end{equation}
	where $\epsilon_{2}$ is chosen in such a way that $L_{5}(0) \leq 1$. Using (\ref{w_23}), inequality  (\ref{ss_12}) can be written as,
	\begin{equation}
	\label{ss_13}
	W_{2}(L_{5}(s))\leq \epsilon_{2}\chi (s)W_{2}(L_{4}(s))-\epsilon_{2}\chi (s)W_{2}\Big(\frac{W_4(s)}{\chi (s)}\Big),\quad 0 \leq s \leq t.
	\end{equation}
	Since, $L'_{5}(t)=\epsilon_{2}\chi'(t)L_{4}(t)+\epsilon_{2}\chi (t)L'_{4}(t)-\epsilon_{2}W'_{4}(t),$ using (\ref{ss_10}) we obtain
	\begin{equation*}
	\begin{array}{ll}
	\label{ss_14}
	L'_{5}(t) \leq -C_{1}\xi(t)\epsilon_{2}\chi (t)W_{2}(L_{4}(t))+C_{2}\epsilon_{2}  \xi (t)\chi (t) W^{*}_{3}\Big[\frac{c}{\delta_{1}}q(t)f_{1}^{\frac{1}{1+\epsilon_{0}}}\Big]-\epsilon_{2}W'_{4}(t),
	\end{array}
	\end{equation*}
	applying (\ref{ss_12}) to the above inequality, we get
	\begin{equation}
	\begin{array}{ll}
	\label{ss_14} 
	L'_{5}(t) \leq -C_{1}\xi(t)W_{2}(L_{5}(t))+C_{2}\epsilon_{2}  \xi (t)\chi (t) W^{*}_{3}\Big[\frac{c}{\delta_{1}}q(t)f_{1}^{\frac{1}{1+\epsilon_{0}}}\Big]-C_{1}\epsilon_{2}\xi(t)\chi (t)W_{2}\Big(\frac{W_{4}(t)}{\chi(t)} \Big )-\epsilon_{2}W'_{4}(t).
	\end{array}
	\end{equation}
	From the definition of $W_{1}$ and $W_{4}$ we have $W'_{4}(t)=-C_{1}\xi(t)W_{2}(W_{4}(t)),$ and using (\ref{w_3}) observe that
	\begin{equation*}
	\epsilon_{2}\xi (t)\chi(t)\Bigg(C_{2}W_{3}^{*}\Big[\frac{c}{\delta_{1}}q(t)f_{1}^{\frac{1}{1+\epsilon_{0}}}\Big ]-C_{1}W_{2}\Big (\frac{W_{4}(t)}{\chi(t)}\Big)+C_{1}\frac{W_2(W_{4}(t))}{\chi (t)}\Bigg) \leq 0,
	\end{equation*}
	therefore (\ref{ss_14}) leads to,
	\begin{equation}
	\label{L_5}
	L'_{5}(t) \leq C_{1}\xi(t)W_{2}(L_{5}(t)).
	\end{equation}
	From (\ref{w_23}) and (\ref{L_5}) we obtain
	\begin{equation*}
	\label{k}
	C_{1}\xi(t) \leq(W_{1}(L_{5}(t)))'.
	\end{equation*}
	Integrating the above inequality over $[0,t],$ observe that
	\begin{equation*}
	W_{1}(L_{5}(t)) \geq C_{1}\int_{0}^{t}\xi (s)-W_{1}(L_{5}(0)).
	\end{equation*}
	Since $W_{1}$ is decreasing, $L_{5}(0) \leq 1$ and $W_{1}(1)=0$, we have 
	\begin{equation*}
	L_{5}(t) \leq W_{1}^{-1}\Big ( C_{1}\int_{0}^{t}\xi (s)ds \Big )=W_{4}(t).
	\end{equation*}
	From (\ref{L5}), we get 
	\begin{equation*}
	L_{4}(t) \leq\Big(\frac{1+\epsilon_{2}}{\epsilon_{2}}\Big)\frac{W_{4}(t)}{\chi (t)}.
	\end{equation*}
	Similarly, recalling the definition of the functional $L_4$, we get
	\begin{equation}
	\label{k2}
	L_{3}(t) \leq \Bigg(\frac{1+\epsilon_{2}}{\alpha_{0}\epsilon_{2}}\Bigg)\frac{W_{4}(t)}{\chi (t)q(t)},
	\end{equation}
	since $L_{3} \sim E $, for some $c>0$ we have $E(t) \leq c L_{3}$, then (\ref{k2}) becomes,
	\begin{equation}
	\label{k3}
	E(t)\leq \Bigg(\frac{c(1+\epsilon_{2})}{\alpha_{0} \epsilon_{2}}\Bigg)\frac{W_{4}(t)}{\chi (t)q(t)},
	\end{equation}
	from (\ref{Egeq}) and (\ref{k3}),  we conclude that
	\begin{equation*}
	E(t) \leq \frac{CW_{4}(t)}{\chi (t) q(t)}, \quad {\rm where}\ C= \mathrm{max} \Big (\frac{2}{\alpha_{0}}, \frac{c(1+\epsilon_{2})}{\delta_{0}\epsilon_{2}}\Big).
	\end{equation*}
	Hence the theorem is proved.
\end{proof}
\begin{example}
	Here we present an example which demonstrates the main theorem of this paper. Assume that $\epsilon_{0}\in(0,1)$, $c$ is a positive generic constant and
	\[
	b(t)=c_0(t+1)^{-\frac{1}{p-1}},\ \xi(t)= \frac{c_0^{1-p}}{p-1},\ H(t)=t^{p(1+\epsilon_{0})},
	\]
	where $0<c_0<\frac{2-p}{p-1}$ and $p$ is defined later. Moreover, assume that $u_0$ satisfies
	\[
	\big[1+\|\Delta u_{0}\|^2_2  \big] \sim (1+t)^{\lambda}, \quad \ {\rm for\ }\ \lambda<\frac{2-p}{p-1}.
	\]
	Then recalling the definitions from (\ref{w_23}) and (\ref{w_4}), we obtain:
	\begin{equation*}
	\begin{array}{ll}
	W(t)=ct^{p(1+\epsilon_{0})},\ W_1(t)=c(t^{1-{p(1+\epsilon_{0})}}-1),\ W_2(t)=ct^{p(1+\epsilon_{0})},\ W_3(t)=ct^{\frac{p(1+\epsilon_{0})}{p(1+\epsilon_{0})-1}},\medskip \\
	W_3^*(t)=ct^{p(1+\epsilon_{0})},\ W_4(t)=c(t+1)^{\frac{1}{1-p(1+\epsilon_{0})}}.
	\end{array}
	\end{equation*}
	Note that $q(t)f_1^{\frac{1}{(1+\epsilon_{0})}}(t)\sim (t+1)^{\big(\frac{1}{1+\epsilon_{0}}\big)(\lambda-\frac{1}{p-1})}$.\\ Now, choose $\chi(t)=(t+1)^\gamma,$ where $\gamma<\min\Big(0,-\frac{1}{p(1+\epsilon_{0})-1} + \frac{1+\lambda-\lambda p}{(p-1)(1+\epsilon_{0})}\Big).$ Then it is easy to observe that $\chi(t)$ satisfies \eqref{w_3}. Therefore \eqref{energy_est} implies
	\begin{equation*}
	E(t)\leq
	\left\{  \begin{array}{ll}	
	c (t+1)^{-\frac{1}{1+\epsilon_{0}}\Big(\frac{2-p}{p-1} - \lambda\Big)},&\quad {\rm if\ } 0<\lambda<\frac{2-p}{p-1}\ {\rm and}\ 1<p<2,\medskip \\
	c (t+1)^{-\Big(\frac{2-p+\epsilon_{0}(1-p)}{(1+\epsilon_{0})(p-1+p\epsilon_{0})}\Big)},&\quad {\rm if\ } \lambda\leq 0\ {\rm and}\ 1<p<\frac{3}{2}.
	\end{array}\right. 
	\end{equation*}
	Hence, from the above estimate we conclude that $\lim\limits_{t\rightarrow \infty}E(t)=0.$
\end{example}
\bibliographystyle{plain}
\bibliography{reference_new}
\end{document}